\documentclass[11pt]{amsart}
\usepackage{amssymb}
\usepackage{amsthm}
\usepackage{latexsym,array}
\usepackage{amsthm}
\usepackage{amsmath, mathrsfs}
\usepackage[english]{babel}
\usepackage{color}
\usepackage[mathscr]{euscript}
\usepackage{hyperref}
\usepackage{url}
\usepackage{cite}
\usepackage{latexsym,enumerate}
\usepackage{amsmath,amsthm,amsopn,amstext,amscd,amsfonts,amssymb}
\usepackage{verbatim}

\usepackage[left=2.5cm,right=2.5cm,top=2cm,bottom=2cm]{geometry}

\newtheorem{theorem}{Theorem}[section]
\newtheorem{lemma}[theorem]{Lemma}

\newtheorem{corollary}[theorem]{Corollary}
\newtheorem{definition}[theorem]{Definition}

\newtheorem{remark}[theorem]{Remark}

\newcommand{\spb}[1]{\smallskip}
\newcommand{\mpb}[1]{\medskip}
\newcommand{\bpb}[1]{\bigskip}

\begin{document}
\title[Hypercomplex singular Cauchy-like integral in Douglis analysis]{A note on the hypercomplex Singular Cauchy-like Integral in Douglis Analysis}

\author[I. Paulino-Basurto]{Isidro Paulino-Basurto}
\address{SEPI, ESFM-Instituto Polit\'ecnico Nacional. 07338, Ciudad M\'exico, M\'exico}
\email{isidroo-@hotmail.com}
\author[J. Marie Vilaire]{Jean Marie Vilaire}
\address{Department of Computer and Software Engineering and Mobile Computing and Networking Research
Laboratory, Polytechnique, Montréal, Montreal, Quebec, Canada}
\email{jeanmarievilaire@yahoo.fr}
\author[J. O. González-Cervantes]{José Oscar González-Cervantes}
\address{SEPI, ESFM-Instituto Polit\'ecnico Nacional. 07338, Ciudad M\'exico, M\'exico}
\email{jogc200678@gmail.com}
\author[J. Bory-Reyes]{Juan Bory-Reyes}
\address{SEPI, ESIME-Zacatenco-Instituto Polit\'ecnico Nacional. 07338, Ciudad M\'exico, M\'exico}
\email{juanboryreyes@yahoo.com}

\begin{abstract}
\noindent
Hypercomplex analysis extends classical complex analysis to Douglis algebras, replacing holomorphic functions with hyperanalytic functions, i.e., null solutions of the Douglis operator in $\mathbb R^2$ (identifying $\mathbb R^2$ with $\mathbb C$). The hypercomplex singular Cauchy-like integral, is one of the main actors in the hypercomplex analysis and many properties have been studied in recently decades. This paper aims to present local estimates of the hypercomplex singular Cauchy-like integral.
\end{abstract}
\maketitle

\vspace{0.3cm}
\small{
\noindent
\textbf{Keywords and phrases: Douglis algebras, hyperanalytic functions, singular Cauchy-like integral}

\noindent
\textbf{Mathematics Subject Classification (2020): Primary 30E20; 30E25; Secondary 35F15, 45E05} 
}

\section{Introduction}
Classical complex analysis offers refined solutions to a variety of problems and several applications in physics, engineering and many real-world phenomena. However, currently often require richer structures capable of handling new elliptic systems.

In his $1953-$paper \cite {Douglis1953}, Avron Douglis introduced an algebra and a class of functions, which satisfy classically the principal part of an elliptic system of $2r$ equations with $2r$ unknowns and two independent variables, the so-called Douglis system (that is, an elliptic system of first order in two independent variables). 

When the above-mentioned elliptic system is considered homogeneous and contains no terms of zero order, can be represented by a single hypercomplex equation in the matter of the so-called Douglis operator, whose continuously differentiable null solutions are called hyperanalytic functions. As a generalization of the holomorphic functions in the complex plane, hyperanalytic functions are the main objects in hypercomplex analysis. The classical works here are \cite{Gilbert1983, GilbertBuchanan1989, GilbertHile1974, GilbertWendland1975, Soldatov2006, Wendland1979}. 

Related results to hypercomplex function theory and development of techniques based on Cauchy-like integrals in Douglis algebras context can be found in \cite{BlayaBoryPena2003, AbreuBoryPeña2005, AbreuBoryVilaire2009, AbreuBoryVilaire2010, AbreuBoryVilaire2014, GilbertZeng1992, Huang1988, Liu1996}.

\section{Preliminaries}
In this section, we will touch the fundamentals and cornerstones for the hypercomplex analysis, needed for the core of our work.

The Douglis algebra $\mathbb D$ is generated by the two elements $i$ and $e$, governed by the multiplication rules: $i^2=-1$, $ie=ei$, $e^0=1$, $e^r=0$, where $r$ is a positive integer.

A particular element $a\in \mathbb D$ can be written as a hypercomplex number in the form: $a=a_0+\sum\limits_{k=1}^{r-1}a_ke^k,$ where each $a_k$ is a complex number. The expression $A = \sum\limits_{k=1}^{r-1}a_ke^k$ is called the nilpotent part of $a$ and $a_0$ the complex part.

The conjugation of any hypercomplex number $a$, denoted by $\bar{a}$, is defined as $\bar{a} = \sum\limits_{k=0}^{r-1}\bar{a}_k e_k$.

If the complex part $a_0$ of a hypercomplex number $a$ is not null then the multiplicative inverse $a^{-1}$ of $a$ is given by
\[
a^{-1}  =  a_0^{-1} \sum_{k=0}^{r-1}(-1)^k\left( \dfrac{A}{a_0} \right)^k   .
\]

If $a_0 = 0$, then $a$ does not have a multiplicative inverse.

The algebraic norm of $a$ is defined by the expression
\[
\left| a \right| := \sum_{k=0}^{r-1} |a_k|   .
\]
A hypercomplex function $f$ is a map from the complex plane into the Douglis algebra and has the form $f(z):=\sum\limits_{k=0}^{r-1}f_k(z)e^k$, where $f_k$ are complex functions and $f_0$ is called the complex part of $f$.

We say that $f$ belongs to a class of $\mathbb D-$valued functions if its components $f_k$ belong to the corresponding $\mathbb C-$valued functions class. For example, we say that $f\in C^1(\Omega),$ where $\Omega$ is some domain in the complex plane $\mathbb{C},$ if $f$ is defined on $\Omega$ and each $f_k$ and its derivative are continuous in $\Omega.$

The central notion in hypercomplex analysis is that of hyperanalytic function, introduced by means of the Douglis differential operator, i.e.,  the operator given by
\[\partial_{\overline{z}}^q:=\partial_{\overline{z}}+q(z){\partial_z},\]
which has bounded and continuous derivatives on $\mathbb R^2$ (identifying $\mathbb{R}^2$ with $\mathbb{C}$ in the usual way) and vanishing outside of a sufficiently large circle, $q(z)\in\mathbb D$ is a nilpotent function and
\[
\partial_{\overline{z}}f+q(z){\partial_z}f=0, \qquad z=x+iy,
\]
where $\partial_z:=\frac{1}{2}(\partial_x-i\partial_y),\ \partial_{\overline{z}}:=\frac{1}{2}(\partial_x+i\partial_y)$.

A $\mathbb D$-valued function $f$ is said to be \emph{hyperanalytic} in the domain $\Omega$ of $\mathbb C$ if and only if $f\in C^1(\Omega)$ and satisfies in $\Omega$ the equation $\partial_{\overline{z}}^q{f}=0.$

Fundamental solution of the Douglis operator, what is known as the Douglis Cauchy kernel, is given by
\begin{center}
$\displaystyle e_z(\zeta)=\frac{1}{2\pi}\frac{\partial_{\zeta}w(\zeta)}{w(\zeta)-w(z)}\rm,$
\end{center}
being $w(z)=z+\sum\limits_{k=1}^{r-1}w_k(z)e^k$ the generating solution of $\displaystyle \partial_{\overline{z}}^q,$ i.e., $\partial_{\overline{z}}^q{w}=0.$ The nilpotent part of $w$ possesses bounded and continuous derivatives up to order two in the complex plane.

The following properties of the generating solution $w(z)$ will be useful.
\[
|w(z_2)-w(z_1)|\le c|z_2-z_1|,\ |w(z_2)-w(z_1)|^{-1}\le c |z_2-z_1|^{-1}.
\]

Throughout the paper $\gamma$ denotes a closed Jordan curve oriented positively which is the boundary of a simply connected domain $\Omega\ni 0$ in the complex plane $\mathbb{C}.$ We will use the notation $\Omega^+:=\Omega$ and $\Omega^-:=\mathbb C\setminus ({\Omega\cup \gamma})$.

Let us introduce the following notations \( \gamma_{\delta}(t) = \{\tau \in \gamma: |\tau - t| \leq \delta\} \), \( 0 < \delta \leq d \), $\theta(\delta) = \sup_{t \in \gamma} \theta_t(\delta)$. \( \theta_t(\delta) = \text{mes} \, \gamma_{\delta}(t) :=\displaystyle\int_{\gamma_{\delta}(t)}ds\),\; \(\theta(\delta) = \sup_{t \in \gamma} \theta_t(\delta)\). It is clear that \( \theta(\delta) \geq \delta \).

We introduce the class of regular curves, which will be treated in this paper, see \cite{Salaev1976}.
\begin{definition}
Let $\gamma$ be a closed Jordan curve oriented positively. The curve $\gamma$ is said to be regular if there exists a constant $c >1$ such that
\[\theta (\delta )  \leq c \, \delta , \qquad \text{for } \delta\in (0,d].\]
\end{definition}

Examples of regular curves are convex curves, Liapunov curves, chord arc curves, smooth curves and Lipschitz curves.

Now we shall introduce the class of functions which will be used in this paper.
\begin{definition}
    Let $\gamma$ be a regular curve and $0< \alpha \leq 1$. We define the classic Hölder space:
    \[
    H_{\alpha}(\gamma) = \{ f \in C(\gamma) : |f(t) - f(\tau)| \leq c_f |t - \tau|^{\alpha}, \, \forall \ t, \tau \in \gamma\},
    \]
where $C(\gamma)$ denotes the class of all $\mathbb D-$valued continuous functions on $\gamma$.
\end{definition}
We shall consider the modulus of continuity $\omega_f$ of a continuous function $f$ as follows:
\[
w_f (\xi)  =  \xi \sup_{\delta \geq \xi} \delta^{-1} \sup_{t,\tau \in\gamma \ |t-\tau|\leq \delta} |f(t)-f(\tau)| , \qquad \text{for } \xi > 0  .
\]
Then, Hölder space $H_{\alpha}(\gamma)$ equiped with the norm
\[
\| f \|_{\alpha}    :=   \max_{t\in\gamma}|f(t)| + \sup_{\xi\in (0,d]} \dfrac{\omega_f (\xi)}{\xi^{\alpha}}
\]
becomes a Banach space.

\begin{definition}
Let $\gamma$ be a regular curve. The hypercomplex Cauchy type integral of a $\mathbb{D}$-valued function $f\in H_{\alpha}(\gamma)$ is defined by the expression:
\begin{equation}\label{eq:hyper-Cauchy}
\mathcal{C}_{\gamma}f(z)  =   \int_{\gamma}e_z(\zeta)n_q(\zeta)f(\zeta)ds, \qquad z\in \Omega^\pm
\end{equation}
where $n_q(\zeta):=n(\zeta)+q(\zeta)\overline{n}(\zeta)$, $ds$ is the arc length differential in $\gamma$ and $n(\zeta)$ the unit normal vector to $\gamma$ at the point $\zeta$.
\end{definition}

\begin{remark}
It is clear that $\mathcal{C}_{\gamma}f$ is hyperanalytic in $\mathbb{C}$ $\setminus$ $\gamma$, further $\partial^q_{\bar{z}}e_z(\zeta)=0$ in $\mathbb{C}$ $\setminus$ $\gamma$.
\end{remark}

\begin{definition}
Let $\gamma$ be a regular curve and $f\in H_{\alpha}(\gamma)$. The singular hypercomplex Cauchy type integral of $f$ is defined as
\[
\mathcal{S}_{\gamma}f(t) = 2\displaystyle\int_{\gamma}e_t(\zeta)n_q(\zeta)(f(\zeta)-f(t))ds+f(t),\qquad t\in \gamma,
\]
\end{definition}
Throughout this paper notation $c$ will be used for constants which may vary from one occurrence to the next.
		
\section{Auxiliary Lemmas}
We state some necessary technical tools, which are indispensable in estimating inequalities that will be required later. The reader may check for instance \cite{BlayaBoryPena2003} for reference.

Let $t\in\gamma$ and $0< r \leq d = diam\gamma$. Write $\gamma_r (t) := \{ \tau\in\gamma \ : \ |t-\tau |\leq r \}.$
\begin{lemma}\label{lem_inte_tsup}
Let $\gamma$ be a regular curve. If $t_1,\ t_2\in\gamma$ with $|t_2-t_2|=2\varepsilon,\ \varepsilon\in (0,d/2]$
\[\left| \int_{\gamma \setminus \big(\gamma_\epsilon(t_1)\cup \gamma_\epsilon(t_2)\big)} e_{t_1}(\zeta) n_q(\zeta) ds \right| \leq c.\]
\end{lemma}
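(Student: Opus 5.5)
We need to bound $\left|\int_{\gamma\setminus(\gamma_\varepsilon(t_1)\cup\gamma_\varepsilon(t_2))} e_{t_1}(\zeta)n_q(\zeta)\,ds\right|$ (the hypothesis should read $|t_1-t_2|=2\varepsilon$). The plan is to reduce to the full principal-value integral over $\gamma\setminus\gamma_\varepsilon(t_1)$, which is uniformly bounded, and then control the extra piece $\gamma_\varepsilon(t_2)$ by regularity of $\gamma$.

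\textbf{Step 1: the principal-value integral is uniformly bounded.} Since $\gamma_\varepsilon(t_1)\cap\gamma_\varepsilon(t_2)$ contains at most the points at distance exactly $\varepsilon$ from both, a set of measure zero, we have
\[
\int_{\gamma\setminus(\gamma_\varepsilon(t_1)\cup\gamma_\varepsilon(t_2))}=\int_{\gamma\setminus\gamma_\varepsilon(t_1)}-\int_{\gamma_\varepsilon(t_2)} .
\]
For the first term, write $n_q(\zeta)\,ds$ as a differential form. With $d\zeta=\tau\,ds$ the tangent element and $n=-i\tau$, we get $n\,ds=-i\,d\zeta$ and $\bar n\,ds=i\,d\bar\zeta$, so $n_q\,ds=-i(d\zeta-q\,d\bar\zeta)$. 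Since $\partial^q_{\bar\zeta}w=0$, up to a nilpotent correction whose coefficients are bounded by the properties of $w$, the form $e_{t_1}(\zeta)n_q\,ds$ equals $c\,\dfrac{dw(\zeta)}{w(\zeta)-w(t_1)}$. Hence the integral over $\gamma\setminus\gamma_\varepsilon(t_1)$ is the increment of $\log(w(\zeta)-w(t_1))$ along the arc(s) from $\gamma$ leaving the disc to $\gamma$ re-entering it. Here the logarithm is taken in $\mathbb D$: the complex part of $\log$ plus a nilpotent series in $W/(\zeta-t_1)$, where $W$ is the nilpotent part of $w(\zeta)-w(t_1)$. This nilpotent series is bounded because $|W|\le c|\zeta-t_1|$.

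If instead one wants to avoid exactness issues (the nilpotent part of the form need not be exact), there is a second route. Write $e_{t_1}=\frac{1}{2\pi(\zeta-t_1)}+R(\zeta,t_1)$, where $|R|\le c/|\zeta-t_1|$ and $R$ is nilpotent. Then treat the classical piece as the classical Cauchy principal part, which is bounded by $\pi$ plus an argument increment. The argument variation of $\zeta-t_1$ across the endpoints lying on the circle $|\zeta-t_1|=\varepsilon$ is bounded, provided the number of crossings is controlled; this is the delicate point, and regularity gives it via the standard Salaev argument, see \cite{Salaev1976, BlayaBoryPena2003}.

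\textbf{Step 2: the piece over $\gamma_\varepsilon(t_2)$ is bounded.} On $\gamma_\varepsilon(t_2)$ we have $|\zeta-t_1|\ge|t_1-t_2|-|\zeta-t_2|\ge\varepsilon$. Using $|w(\zeta)-w(t_1)|^{-1}\le c|\zeta-t_1|^{-1}$, the boundedness of $\partial_\zeta w$ and $q$, and the inverse formula in $\mathbb D$, we get $|e_{t_1}(\zeta)n_q(\zeta)|\le c\varepsilon^{-1}$. Therefore
\[
\left|\int_{\gamma_\varepsilon(t_2)}e_{t_1}n_q\,ds\right|\le c\varepsilon^{-1}\theta(\varepsilon)\le c .
\]

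\textbf{Main obstacle.} The hardest step is Step 1, namely making the uniform bound of $\int_{\gamma\setminus\gamma_\varepsilon(t_1)}e_{t_1}n_q\,ds$ rigorous for merely regular (non-smooth) curves. There I would first try the decomposition into the classical kernel plus nilpotent terms $R$. For the classical term I would invoke the known boundedness of truncated Cauchy integrals of $1$ on regular curves. For the $R$-terms, I would try to expand them so that the leading part is again of Cauchy form $\frac{d\bar\zeta}{\zeta-t_1}$-type with bounded coefficients times first differences. I would then handle the difference $\frac{\overline{\zeta-t_1}}{\zeta-t_1}$-type kernels, which arise from Taylor expanding $w_k$ with bounded second derivatives, by integration by parts against $d\log$, again bounded by regularity.
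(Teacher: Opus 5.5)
Your reduction and Step 2 are correct. Since $|t_1-t_2|=2\varepsilon$, the arcs $\gamma_\varepsilon(t_1)$ and $\gamma_\varepsilon(t_2)$ share at most the midpoint. On $\gamma_\varepsilon(t_2)$ you have $|\zeta-t_1|\ge\varepsilon$, so that piece is at most $c\,\varepsilon^{-1}\theta(\varepsilon)\le c$.

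The gap is Step 1. The paper gives no proof of this lemma; it only refers to \cite{BlayaBoryPena2003}. The uniform bound for $\int_{\gamma\setminus\gamma_\varepsilon(t_1)}e_{t_1}n_q\,ds$ is exactly Corollary~\ref{lem_inte_t}, which the paper states as a consequence of this lemma. So it carries the entire content and must be proved independently, and your sketch does not do so.

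\textbf{Crossing count.} Regularity does not control how often $\gamma$ meets the circle $|\zeta-t_1|=\varepsilon$. Take a Lipschitz (hence regular) curve through $t_1$ which, in polar coordinates about $t_1$, is given near the circle by $\rho(\phi)=\varepsilon+(\phi-\phi_0)^2\sin\frac{1}{\phi-\phi_0}$. It crosses the circle infinitely often. Then $\gamma\setminus\gamma_\varepsilon(t_1)$ has infinitely many components, and bounding the logarithmic increment on each arc by a constant gives nothing after summation. No Salaev-type counting argument repairs this.

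\textbf{Term-by-term splitting.} Your second route has a further problem. The ``classical piece'' $\frac{n_q\,ds}{2\pi(\zeta-t_1)}$ contains $\frac{q\,d\bar\zeta}{\zeta-t_1}$. At a corner of $\gamma$ located at $t_1$, $\int_{\gamma\setminus\gamma_\varepsilon(t_1)}\frac{d\bar\zeta}{\zeta-t_1}$ grows like $\ln\frac{1}{\varepsilon}$. So the terms you propose to estimate separately are not individually bounded on Lipschitz curves; only their full combination is.

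\textbf{Missing idea.} Exploit exactness globally, through Green's formula, not arc by arc. Your hedge ``up to a nilpotent correction'' is unnecessary. From $\partial_{\bar\zeta}w=-q\,\partial_\zeta w$ and $n_q\,ds=-i\,(d\zeta-q\,d\bar\zeta)$ one gets exactly
$e_{t_1}(\zeta)n_q(\zeta)\,ds=-\frac{i}{2\pi}\frac{dw(\zeta)}{w(\zeta)-w(t_1)}$.
This is a closed $\mathbb D$-valued $1$-form on $\mathbb C\setminus\{t_1\}$, because $\mathbb D$ is commutative and $w\in C^2$.

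Apply Green's formula (e.g.\ the Gauss--Green theorem for sets of finite perimeter) to $G=\Omega^+\setminus\overline{D(t_1,\varepsilon)}$. Its boundary has finite length and $t_1\notin\overline{G}$. Hence $\int_{\gamma\setminus\gamma_\varepsilon(t_1)}e_{t_1}n_q\,ds$ equals, up to orientation, the integral of the same form over a subset of the circle $|\zeta-t_1|=\varepsilon$. There the integrand is at most $c/\varepsilon$ and the length at most $2\pi\varepsilon$.

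This gives a bound $c$ independent of $t_1$, of $\varepsilon$ and of the number of crossings, using only rectifiability. Regularity then enters only through your Step 2.
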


\begin{corollary}\label{lem_inte_t} 
Let $\gamma$ be a regular curve. If $t \in \gamma$, then
\[
\left| \int_{\gamma \setminus \gamma_\epsilon(t)} e_t(\zeta) n_q(\zeta) ds \right| \leq c.
\] 
\end{corollary}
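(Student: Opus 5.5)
The corollary follows from Lemma \ref{lem_inte_tsup} by comparing the two domains of integration. Fix $t\in\gamma$ and $\varepsilon\in(0,d/2]$. Set $t_1:=t$ and choose a point $t_2\in\gamma$ with $|t_2-t|=2\varepsilon$. Such a point exists by continuity of $\tau\mapsto|\tau-t|$ on the connected curve $\gamma$: this function takes the value $0$ at $t$ and, for some $\tau$, a value at least $d/2\ge\varepsilon$, hence also $2\varepsilon$ if $2\varepsilon\le\max_\tau|\tau-t|$. If $2\varepsilon$ exceeds this maximum, then $\gamma\setminus\gamma_\varepsilon(t)$ lies in the annulus $\varepsilon<|\zeta-t|\le 2\varepsilon$. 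In that case I estimate directly:
\[
\int_{\gamma\setminus\gamma_\varepsilon(t)}|e_t(\zeta)n_q(\zeta)|\,ds\le c\,\varepsilon^{-1}\theta(2\varepsilon)\le c .
\]
This uses $|e_t(\zeta)|\le c|\zeta-t|^{-1}$, which comes from $|w(\zeta)-w(t)|^{-1}\le c|\zeta-t|^{-1}$, the boundedness of $\partial_\zeta w$ and $q$, and the regularity of $\gamma$.

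In the main case, decompose
\[
\gamma\setminus\gamma_\varepsilon(t)=\Big(\gamma\setminus\big(\gamma_\varepsilon(t_1)\cup\gamma_\varepsilon(t_2)\big)\Big)\cup\big(\gamma_\varepsilon(t_2)\setminus\gamma_\varepsilon(t_1)\big).
\]
Since $|t_1-t_2|=2\varepsilon$, the two small arcs $\gamma_\varepsilon(t_1)$ and $\gamma_\varepsilon(t_2)$ meet in at most boundary points, so $\gamma_\varepsilon(t_2)\setminus\gamma_\varepsilon(t_1)$ is $\gamma_\varepsilon(t_2)$ up to a null set. The integral over the first piece is bounded by $c$ directly by Lemma \ref{lem_inte_tsup}.

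For the second piece, every $\zeta\in\gamma_\varepsilon(t_2)$ satisfies $|\zeta-t|\ge|t_2-t|-|\zeta-t_2|\ge\varepsilon$. Therefore $|e_t(\zeta)n_q(\zeta)|\le c\,\varepsilon^{-1}$, and the integral is at most
\[
c\,\varepsilon^{-1}\,\mathrm{mes}\,\gamma_\varepsilon(t_2)\le c\,\varepsilon^{-1}\theta(\varepsilon)\le c
\]
by regularity. Adding the two bounds gives the claim, with a constant independent of $t$ and $\varepsilon$.

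The main point needing care is the kernel bound $|e_t(\zeta)n_q(\zeta)|\le c|\zeta-t|^{-1}$ in the algebraic norm. This requires $|ab|\le c|a||b|$ in $\mathbb D$, which holds with a constant depending only on $r$. It also requires that the inverse of $w(\zeta)-w(t)$ be controlled by the stated property of $w$. Both are standard, and everything else is the geometric splitting above.
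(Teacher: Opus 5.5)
Your argument is correct and is essentially the deduction the paper intends: the paper gives no separate proof and presents the statement as an immediate corollary of Lemma~\ref{lem_inte_tsup}, deferring details to \cite{BlayaBoryPena2003}. Your reduction (choose $t_2$ with $|t_2-t|=2\varepsilon$, apply that lemma, bound the restored piece over $\gamma_\varepsilon(t_2)$ by $c\,\varepsilon^{-1}\theta(\varepsilon)\le c$, and treat separately the degenerate case where no such $t_2$ exists) supplies exactly that reduction. If the corollary is meant for all $\varepsilon\in(0,d]$ rather than $(0,d/2]$, the remaining range follows from the same crude bound, since then $\int_{\gamma\setminus\gamma_\varepsilon(t)}|e_t n_q|\,ds\le c\,\varepsilon^{-1}\theta(d)\le c$.
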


\begin{lemma}\label{lem_g}
Let $\gamma$ be a regular curve. Let $g$ be a non negative function, non increasing in $(0,d].$ Then for every $r'$, $r''\in (0,d]$ and $r''>r'$, we have the following formula
\[ 
\int_{\gamma_{r''}(x)\setminus \gamma_{r'}(x)}g(|y-x|)ds=\int_{r'}^{r''}g(\xi)d\theta_x(\xi)  .
\]
\end{lemma}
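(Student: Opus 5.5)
The identity is a Stieltjes-integral version of the coarea (layer-cake) formula for arc length on $\gamma$. I would prove it from the definition $\theta_x(\xi)=\operatorname{mes}\gamma_\xi(x)$. The function $\xi\mapsto\theta_x(\xi)$ is non-decreasing and right-continuous on $(0,d]$, because $\gamma_\xi(x)=\bigcap_{\eta>\xi}\gamma_\eta(x)$ and arc-length measure is finite. Hence it defines a Lebesgue--Stieltjes measure $d\theta_x$ on $(r',r'']$, with $d\theta_x((a,b])=\theta_x(b)-\theta_x(a)=\operatorname{mes}\{\tau\in\gamma: a<|\tau-x|\le b\}$. In other words, $d\theta_x$ is exactly the pushforward of arc length $ds$ on $\gamma_{r''}(x)\setminus\gamma_{r'}(x)$ under the map $\tau\mapsto|\tau-x|$. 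The whole proof is to make this precise for $g$.

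First I check the formula for indicator functions $g=\chi_{(a,b]}$ with $r'\le a<b\le r''$. Here both sides equal $\theta_x(b)-\theta_x(a)$ by the computation above. By linearity the formula then holds for step functions that are constant on half-open intervals.

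Next I pass to a general non-negative non-increasing $g$. Such a $g$ is Borel measurable: its superlevel sets $\{g>\lambda\}$ are intervals with left endpoint $0$. I would use the change-of-variables theorem for pushforward measures, $\int_E g(\phi(\tau))\,ds=\int g\,d(\phi_*ds)$ with $\phi(\tau)=|\tau-x|$ and $E=\gamma_{r''}(x)\setminus\gamma_{r'}(x)$, together with the identification $\phi_*ds=d\theta_x$ on $(r',r'']$. That identification follows because the two measures agree on the $\pi$-system of half-open intervals, so the monotone class theorem applies. Alternatively, and more elementarily, I would approximate $g$ from below by an increasing sequence of step functions. Because $g$ is monotone it has at most countably many discontinuities, and right-endpoint step functions on dyadic partitions converge to $g$ at every point where $g$ is left-continuous. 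The monotone convergence theorem then transfers the formula from step functions to $g$ on both sides.

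\textbf{Main obstacle.} The delicate point is the bookkeeping at atoms of $d\theta_x$, meaning radii $\xi$ where $\gamma$ carries positive length on the circle $|\tau-x|=\xi$. At such radii the choice of half-open intervals $(a,b]$ and the one-sided continuity of $g$ matter. The pushforward formulation handles this cleanly, so that is the route I would commit to. It also settles the convention that the Stieltjes integral $\int_{r'}^{r''}$ is taken over $(r',r'']$, which matches the set $E$, since $\gamma_{r'}(x)$ includes $|\tau-x|=r'$.

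Finiteness is automatic: $g\le g(r')$ on $(r',r'']$ and $\operatorname{mes}\gamma<\infty$. Regularity of $\gamma$ is not really needed for the identity itself; it is only used later to estimate $\int g\,d\theta_x$ via integration by parts, $\int_{r'}^{r''} g\,d\theta_x\le g(r'')\theta_x(r'')+\int_{r'}^{r''}\theta_x(\xi)\,d(-g(\xi))$, combined with $\theta_x(\xi)\le c\,\xi$.
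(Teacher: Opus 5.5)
The paper itself gives no proof of this lemma; it is imported from the literature (\cite{BlayaBoryPena2003}). The usual proof of such formulas is a Darboux-sum sandwich that uses the monotonicity of $g$ essentially. For a partition $r'=\xi_0<\xi_1<\dots<\xi_n=r''$ one has $g(\xi_k)\le g(|y-x|)\le g(\xi_{k-1})$ on $\{\xi_{k-1}<|y-x|\le\xi_k\}$, whence
\[
\sum_{k=1}^{n} g(\xi_k)\bigl(\theta_x(\xi_k)-\theta_x(\xi_{k-1})\bigr)\le\int_{\gamma_{r''}(x)\setminus\gamma_{r'}(x)}g(|y-x|)\,ds\le\sum_{k=1}^{n} g(\xi_{k-1})\bigl(\theta_x(\xi_k)-\theta_x(\xi_{k-1})\bigr),
\]
and one lets the mesh tend to zero. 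Your route is different and correct. You identify $d\theta_x$ on $(r',r'']$ with the image of arc length under $y\mapsto|y-x|$: the two measures agree on half-open intervals, $\theta_x$ is right-continuous because the balls are closed, and the $\pi$--$\lambda$ theorem finishes. This gives the identity for every non-negative Borel $g$, so monotonicity is used only for measurability and finiteness. That generality is not idle, since the paper applies the lemma to $\xi^{\alpha+\beta-2}$ also when $\alpha+\beta>2$, where this kernel is increasing. Your version also pins down the Lebesgue--Stieltjes meaning of the right-hand side over $(r',r'']$. The sandwich argument instead needs the Riemann--Stieltjes integral to exist, which can fail when $g$ jumps at a radius where $d\theta_x$ has an atom. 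For the continuous kernels used later in the paper the two readings agree.

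One correction to your ``more elementary'' alternative, which you do not ultimately rely on. Right-endpoint dyadic step functions of a non-increasing $g$ converge at non-dyadic $\xi$ to $g(\xi^+)=\lim_{\eta\downarrow\xi}g(\eta)$. So they recover $g$ at points of right-continuity, not left-continuity. Moreover, the countable exceptional set may carry $d\theta_x$-mass (exactly the atoms you flag), so monotone convergence alone proves the identity only for the limit function, not for $g$. To close that route you would add that $d\theta_x(\{\xi\})=\theta_x(\xi)-\theta_x(\xi^-)=\text{mes}\{y\in\gamma:|y-x|=\xi\}$, so the countably supported difference between $g$ and the limit also satisfies the identity.
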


Before passing another section it is important to mention the following result called The Sokhotski-Plemelj formulas which will be useful later.
\begin{theorem} \label{Sokhotski-Plemelj formula}
Let $\gamma$ be a regular curve and let $f\in H_{\alpha}(\gamma )$ then $\mathcal{C}_{\gamma}f$ has continuous limit values in $\gamma$ and moreover
\begin{align}
& \mathcal{C}^+ _{\gamma} f (t)  :=   \lim_{\Omega^+ \ni z \to t}  \mathcal{C}_{\gamma}f(z)  =  \dfrac{1}{2} (\mathcal{S}_{\gamma}f(t) + f(t)), \qquad t\in\gamma \label{SPf1}
\\
& \mathcal{C}^- _{\gamma} f (t)  :=   \lim_{\Omega^- \ni z \to t}  \mathcal{C}_{\gamma}f(z)  =  \dfrac{1}{2} (\mathcal{S}_{\gamma}f(t) - f(t)), \qquad t\in\gamma. \label{SPf2}
\end{align}
\end{theorem}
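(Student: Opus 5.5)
The plan is to reduce both formulas to a single statement: for $f\in H_\alpha(\gamma)$, the function
\[
\Phi(z):=\int_\gamma e_z(\zeta)n_q(\zeta)\bigl(f(\zeta)-f(t)\bigr)\,ds
\]
extends continuously from $\Omega^\pm$ to the point $t\in\gamma$, and its limit from either side is $\int_\gamma e_t(\zeta)n_q(\zeta)(f(\zeta)-f(t))\,ds$. By the definition of $\mathcal S_\gamma f$, this common limit equals $\tfrac12(\mathcal S_\gamma f(t)-f(t))$. We split
\[
\mathcal C_\gamma f(z)=\Phi(z)+f(t)\,\mathcal C_\gamma 1(z),
\]
where $\mathcal C_\gamma 1(z)=\int_\gamma e_z n_q\,ds$. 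It then suffices to prove the hypercomplex Cauchy formula
\[
\mathcal C_\gamma 1(z)=1 \text{ for } z\in\Omega^+, \qquad \mathcal C_\gamma 1(z)=0 \text{ for } z\in\Omega^- .
\]
This gives $\mathcal C^\pm_\gamma f(t)=\tfrac12(\mathcal S_\gamma f(t)-f(t))+f(t)$ or $+0$, which are exactly the two formulas \eqref{SPf1} and \eqref{SPf2}.

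\textbf{Step 1: the value of $\mathcal C_\gamma 1$.} We use the substitution $\omega=w(\zeta)$. Because $\partial^q_{\bar z}w=0$, the form $n_q\,ds$ combined with $\partial_\zeta w$ reduces (up to the constant $i$ that is absorbed into the normalisation of the kernel) to $dw(\zeta)$ along $\gamma$. Hence
\[
\mathcal C_\gamma 1(z)=\frac{1}{2\pi i}\int_{\gamma} \frac{dw(\zeta)}{w(\zeta)-w(z)} .
\]
The complex part of this integral is the classical winding-number integral for $\zeta-z$. The nilpotent parts come from expanding $(w(\zeta)-w(z))^{-1}$ by the inverse formula. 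Each nilpotent term is an exact differential of a single-valued function on $\gamma$, so it integrates to zero. If $\gamma$ is merely regular rather than rectifiable-smooth, we first apply this on smooth approximating curves, or equivalently use Stokes' formula with $\partial^q_{\bar\zeta}e_z=0$ away from $z$ together with a small circle around $z$.

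\textbf{Step 2: continuity of $\Phi$ up to $\gamma$.} This is the main obstacle. Fix $t\in\gamma$ and $z\in\Omega^\pm$ near $t$, and let $\varepsilon$ be comparable to $|z-t|$. Write $\Phi(z)-\Phi(t)$ as the integral of $(e_z-e_t)n_q(f(\zeta)-f(t))$ and split it over $\gamma_{c\varepsilon}(t)$ and its complement.

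On the near part, we use $|e_z(\zeta)|\le c|\zeta-z|^{-1}$, which follows from the stated bounds on $w$. We first replace $z$ by its nearest point $t_z$ on $\gamma$ (with $|z-t_z|\le|z-t|$), so that $|\zeta-z|\ge c|\zeta-t_z|$ up to a harmless recentering, and the Hölder bound controls $|f(\zeta)-f(t)|$. Lemma \ref{lem_g} together with regularity ($\theta(\delta)\le c\delta$) then bounds both near integrals by $c\,\varepsilon^\alpha$.

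On the far part, for $|\zeta-t|\ge c\varepsilon$ we have
\[
|e_z(\zeta)-e_t(\zeta)|\le c\,|z-t|\,|\zeta-t|^{-2}.
\]
This follows by differentiating $\partial_\zeta w/(w(\zeta)-w(z))$ in $z$ and using the bounds on $w$. Lemma \ref{lem_g} then gives
\[
\varepsilon\int_{c\varepsilon}^d \xi^{\alpha-2}\,d\theta_t(\xi)\le c\,\varepsilon^\alpha
\]
after an integration by parts with $\theta_t(\xi)\le c\xi$ (with a $\log$ factor if $\alpha=1$, still tending to $0$).

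\textbf{Step 3: conclusion.} The same estimates show that $\int_\gamma e_t n_q(f-f(t))\,ds$ converges absolutely, so $\mathcal S_\gamma f$ is well defined. Corollary \ref{lem_inte_t} and Lemma \ref{lem_inte_tsup} control the truncated principal-value pieces, and they give continuity of $\mathcal S_\gamma f$ in $t$ via the standard splitting at $|t_1-t_2|=2\varepsilon$. Uniformity of the above bounds in $t$ then gives continuous limit values. The delicate point throughout is the near estimate for a non-tangential approach being replaced by arbitrary approach; we handle it through the nearest point $t_z$ and the triangle inequality $|\zeta-t|\le|\zeta-z|+|z-t|\le 2|\zeta-z|+\text{dist}$-type bounds.
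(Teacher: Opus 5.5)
The paper states this theorem without proof (it is imported from the cited literature), so your proposal is judged on its own merits. The architecture is the standard one: $\mathcal{C}_\gamma f(z)=\Phi(z)+f(t)\,\mathcal{C}_\gamma 1(z)$, then the Cauchy formula for the constant $1$, then continuity of $\Phi$. Step 1 is correct. With the outward normal, $\partial_{\bar\zeta}w=-q\,\partial_\zeta w$ gives $\partial_\zeta w\,n_q\,ds=-i\,dw$. Writing $W$ for the nilpotent part of $w$, one has $\frac{dw}{w(\zeta)-w(z)}=\frac{d\zeta}{\zeta-z}+d\log(1+A)$ with $A=\frac{W(\zeta)-W(z)}{\zeta-z}$ nilpotent. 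So the nilpotent part is the differential of a single-valued $C^1$ function and integrates to zero over the rectifiable curve $\gamma$. Your far-part kernel estimate $|e_z(\zeta)-e_t(\zeta)|\le c\,|z-t|\,|\zeta-t|^{-2}$ for $|\zeta-t|\ge c\varepsilon$ is also fine.

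The gap is in the near part of Step 2, exactly at the point you flag as delicate. You center the splitting at $t$ with radius $\varepsilon\sim|z-t|$ and claim $\int_{\gamma_{c\varepsilon}(t)}|e_z(\zeta)|\,|f(\zeta)-f(t)|\,ds\le c\,\varepsilon^\alpha$. This fails for tangential approach. Near $\zeta=t_z$ the factor $|f(\zeta)-f(t)|$ has size $|t_z-t|^\alpha\sim\varepsilon^\alpha$, which is not small. Meanwhile $\int_{\gamma_{c\varepsilon}(t)}|\zeta-z|^{-1}ds$ is of order $\log\frac{|z-t|}{\mathrm{dist}(z,\gamma)}$. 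So your ``harmless recentering'' leaves the term $(f(t_z)-f(t))\int_{\gamma_{c\varepsilon}(t)}e_z n_q\,ds$, whose absolute-value bound is only $c\,\varepsilon^\alpha\log\frac{\varepsilon}{\mathrm{dist}(z,\gamma)}$. For a concrete failure, take $\gamma$ straight near $t$, $f(\zeta)=|\zeta-t|^\alpha$ there, and $\mathrm{dist}(z,\gamma)\approx|z-t|\exp(-|z-t|^{-\alpha})$; then the absolute-value near integral stays bounded away from $0$ as $z\to t$. The inequality $|z-t|\le c\,(|\zeta-z|+\mathrm{dist}(z,\gamma))$ that you invoke is false whenever $t_z\ne t$.

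What is missing is cancellation, not a size estimate. Split $f(\zeta)-f(t)=(f(\zeta)-f(t_z))+(f(t_z)-f(t))$. The first piece is fine by absolute values, since $|\zeta-t_z|\le 2|\zeta-z|$. For the second, bound the identity $\int_{\gamma_{c\varepsilon}(t)}e_zn_q\,ds=\mathcal{C}_\gamma 1(z)-\int_{\gamma\setminus\gamma_{c\varepsilon}(t)}e_tn_q\,ds-\int_{\gamma\setminus\gamma_{c\varepsilon}(t)}(e_z-e_t)n_q\,ds$ uniformly, using Step 1, Corollary~\ref{lem_inte_t} and your far-part kernel estimate.

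Alternatively, center everything at $t_z$ with radius $\sim\mathrm{dist}(z,\gamma)$, where the absolute-value near estimate does work, and show $\mathcal{C}_\gamma f(z)-\frac12\bigl(\mathcal{S}_\gamma f(t_z)\pm f(t_z)\bigr)\to 0$. Only then pass from $t_z$ to $t$, noting $|t_z-t|\le 2|z-t|$, using the continuity of $\mathcal{S}_\gamma f$ and $f$ on $\gamma$ from your Step 3. With either repair the rest of your argument goes through.
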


\begin{remark}
Adding and subtracting equalities \ref{SPf1} and \ref{SPf2} from each other, we get
\begin{align*}
\mathcal{C}^+ _{\gamma} f + \mathcal{C}^- _{\gamma} f   &=   \mathcal{S} _{\gamma} f ,
\\
\mathcal{C}^+ _{\gamma} f - \mathcal{C}^- _{\gamma} f   &=   f  . 
\end{align*}
\end{remark}

\begin{remark}
Another consequence of the Theorem \ref{Sokhotski-Plemelj formula} is the following reasoning.
\\
Since $\gamma$ is a regular curve, from \cite[Lemma 1]{BlayaBoryPena2003} we have
\[
\int_{\gamma} e_t (\zeta) n_q (\zeta)(\mathcal{C}_{\gamma}^+ f(\zeta)  -  \mathcal{C}_{\gamma}^+ f(t)) ds      =     0 ,
\]
which implies
\begin{align*}
&\int_{\gamma} e_t (\zeta) n_q (\zeta)(\mathcal{S}_{\gamma} f(\zeta)  -  \mathcal{S}_{\gamma} f(t)) ds          +         \int_{\gamma} e_t (\zeta) n_q (\zeta)(f(\zeta)  -  f(t)) ds =  0
\\
& \Longrightarrow \quad
\mathcal{S}_{\gamma}^2 f(t) - \mathcal{S}_{\gamma} f(t) + \mathcal{S}_{\gamma} f (t)  -  f(t)    =     0
\quad  \Longrightarrow \quad
\mathcal{S}_{\gamma}^2 f(t)    =    f(t)  .
\end{align*}
This finally yields to the involution property of the singular hypercomplex Cauchy type integral
\begin{equation}\label{involution property}
\mathcal{S}_{\gamma}^2   =   I   ,  
\end{equation}
where $I$ is the identity operator.
\end{remark}
   
\section{Freud theorem}
We provide the following theorem, whose proof found in \cite{BlayaBoryPena2003}, will be useful in the proof of the main theorem of this section. 
\begin{theorem} \label{bounded operator}
Let $\gamma$ be a regular curve. Then the singular hypercomplex Cauchy type integral operator $\mathcal{S}_{\gamma}$ is a bounded operator on $H_{\alpha}(\gamma )$, that is
\[
\| \mathcal{S}_{\gamma} f \|_{\alpha}    \leq   \| f \|_{\alpha} ,
\]
for any $f\in H_{\alpha} (\gamma)$.
\end{theorem}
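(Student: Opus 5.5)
The plan is the classical Plemelj--Privalov argument, adapted to regular curves in Salaev's way. I expect to obtain $\|\mathcal{S}_{\gamma}f\|_{\alpha}\le c\,\|f\|_{\alpha}$ with a constant $c$ depending on $\gamma$, $\alpha$ and $w$, which is how I read the inequality in the statement. Two pointwise facts are used throughout.

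The first is the kernel bound $|e_t(\zeta)n_q(\zeta)|\le c|\zeta-t|^{-1}$. It follows from the property $|w(\zeta)-w(t)|^{-1}\le c|\zeta-t|^{-1}$, the boundedness of $\partial_\zeta w$ and of $q$, and the fact that the inverse of a hypercomplex number is controlled by its complex part (finite nilpotent series).

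The second is the conversion of curve integrals into Stieltjes integrals via Lemma \ref{lem_g}. Integrating by parts and using $\theta_t(\xi)\le c\,\xi$ gives
\[
\int_{\gamma_{r''}(t)\setminus\gamma_{r'}(t)} g(|\zeta-t|)\,ds\le c\,\Big(g(r'')r''+\int_{r'}^{r''}g(\xi)\,d\xi\Big)
\]
for non-increasing $g\ge 0$. This will be applied with $g(\xi)=\omega_f(\xi)/\xi$ and $g(\xi)=\omega_f(\xi)/\xi^2$. Both are non-increasing because, by the definition of $\omega_f$, the quotient $\omega_f(\xi)/\xi$ is non-increasing.

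\textbf{Sup norm.} Estimate $|f(\zeta)-f(t)|\le\omega_f(|\zeta-t|)$. The two facts above then give
\[
|\mathcal{S}_{\gamma}f(t)|\le |f(t)|+c\int_0^d\frac{\omega_f(\xi)}{\xi}\,d\xi .
\]
Since $\omega_f(\xi)\le\|f\|_\alpha\,\xi^\alpha$, this is bounded by $c\,\|f\|_\alpha$.

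\textbf{Modulus of continuity.} Fix $t_1,t_2\in\gamma$ with $|t_1-t_2|=2\varepsilon$ and put $\Gamma=\gamma_\varepsilon(t_1)\cup\gamma_\varepsilon(t_2)$. Split $\tfrac12\big(\mathcal{S}_\gamma f(t_1)-\mathcal{S}_\gamma f(t_2)\big)$ into four pieces:
\begin{itemize}
\item[(i)] the integrals over $\Gamma$ of $e_{t_j}n_q\,(f(\zeta)-f(t_j))$, for $j=1,2$;
\item[(ii)] $\displaystyle\int_{\gamma\setminus\Gamma}\big(e_{t_1}(\zeta)-e_{t_2}(\zeta)\big)n_q(\zeta)\,\big(f(\zeta)-f(t_1)\big)\,ds$;
\item[(iii)] $\displaystyle\big(f(t_2)-f(t_1)\big)\int_{\gamma\setminus\Gamma}e_{t_2}(\zeta)n_q(\zeta)\,ds$;
\item[(iv)] the term $\tfrac12\big(f(t_1)-f(t_2)\big)$.
\end{itemize}

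For (i), note $\Gamma\subset\gamma_{3\varepsilon}(t_j)$, so the kernel bound together with the Stieltjes estimate gives $c\int_0^{3\varepsilon}\omega_f(\xi)\xi^{-1}\,d\xi\le c\,\|f\|_\alpha\varepsilon^\alpha$.

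For (iii), Lemma \ref{lem_inte_tsup} (with the roles of $t_1$ and $t_2$ exchanged) bounds the integral by a constant. Hence (iii), and likewise (iv), are at most $c\,\omega_f(2\varepsilon)\le c\,\|f\|_\alpha\varepsilon^\alpha$.

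For (ii), I need the difference estimate $|e_{t_1}(\zeta)-e_{t_2}(\zeta)|\le c\,\varepsilon\,|\zeta-t_1|^{-2}$ for $\zeta\notin\Gamma$. To get it, write
\[
e_{t_1}(\zeta)-e_{t_2}(\zeta)=\frac{1}{2\pi}\,\partial_\zeta w(\zeta)\,\big(w(t_1)-w(t_2)\big)\big(w(\zeta)-w(t_1)\big)^{-1}\big(w(\zeta)-w(t_2)\big)^{-1}.
\]
Then use $|w(t_1)-w(t_2)|\le c\varepsilon$, the inverse bounds, and $|\zeta-t_2|\ge|\zeta-t_1|/3$ off $\Gamma$. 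With this, (ii) is bounded by
\[
c\,\varepsilon\int_\varepsilon^d\frac{\omega_f(\xi)}{\xi^2}\,d\theta_{t_1}(\xi)\le c\,\varepsilon\Big(\frac{\omega_f(\varepsilon)}{\varepsilon}+\int_\varepsilon^d\frac{\omega_f(\xi)}{\xi^2}\,d\xi\Big)\le c\,\|f\|_\alpha\varepsilon^\alpha ,
\]
where the last step uses $\alpha<1$; if $\alpha=1$ a logarithmic factor appears and needs separate attention.

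Combining the pieces gives $|\mathcal{S}_\gamma f(t_1)-\mathcal{S}_\gamma f(t_2)|\le c\,\|f\|_\alpha|t_1-t_2|^\alpha$. Passing to $\omega_{\mathcal{S}f}(\xi)/\xi^\alpha$, whose $\sup_{\delta\ge\xi}$ structure is harmless because $\xi^{1-\alpha}\delta^{\alpha-1}\le1$, completes the bound on the seminorm.

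The main obstacle is the kernel-difference estimate in the Douglis setting. The nilpotent parts of $w$ make the "denominators" algebra-valued, so each inverse must be expanded and bounded through its complex part using the stated properties of $w$. It is also the step where regularity of $\gamma$, through $\theta_t(\xi)\le c\,\xi$, enters essentially in the tail integral.
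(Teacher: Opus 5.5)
Your proposal is correct and is essentially the argument the paper relies on. The paper defers the proof to \cite{BlayaBoryPena2003}, and the estimate it quotes from that proof in the Freud theorem, $|\mathcal{S}_\gamma f(t)-\mathcal{S}_\gamma f(\tau)|\le c\big(\int_0^{|t-\tau|}\omega_f(\xi)\xi^{-1}d\xi+|t-\tau|\int_{|t-\tau|}^d\omega_f(\xi)\xi^{-2}d\xi+\omega_f(|t-\tau|)\big)$, is exactly what your near/far splitting produces using Lemma \ref{lem_inte_tsup}, the kernel-difference bound and Lemma \ref{lem_g}; reading the inequality with a constant $c$ is the only sensible reading.

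Two small corrections. First, the boundary term in your integration-by-parts display should be the lower-endpoint term $c\,g(r')r'$, not $g(r'')r''$, because $\theta_t$ may jump; this is the form you actually use in (ii). Second, the case $\alpha=1$ cannot be handled by ``separate attention'': already for $q=0$ the singular Cauchy integral does not map Lipschitz functions to Lipschitz functions, so the theorem only holds for $0<\alpha<1$.
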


The following Theorem is one of the paper's central goals: a Douglis-valued analogue of a Freud theorem, proved in \cite{MamedkhanovDadashova2018} in the context of complex analysis, on regular curves. 

\begin{theorem}\label{th_main_freud}
Let \( \gamma \) be regular curve. Let $0<\alpha <1$ and $f\in H_{\alpha}(\gamma )$. Then
\[
m(f, \alpha) = m(\mathcal{S}_\gamma f, \alpha) \quad \text{ almost everywhere},
\]
where  
\[
m(f, \alpha):= \{ t \in \gamma : |f(t) - f(\tau)| = o(|t - \tau|^{\alpha}), \, \tau \to t \}.
\]
\end{theorem}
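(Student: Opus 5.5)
By the involution property \eqref{involution property}, $\mathcal{S}_\gamma^2=I$. It therefore suffices to prove the one-sided inclusion
\[
m(f,\alpha)\subset m(\mathcal{S}_\gamma f,\alpha)
\]
for every $f\in H_\alpha(\gamma)$. Applying it to $\mathcal{S}_\gamma f$, which lies in $H_\alpha(\gamma)$ by Theorem \ref{bounded operator}, gives
\[
m(\mathcal{S}_\gamma f,\alpha)\subset m(\mathcal{S}^2_\gamma f,\alpha)=m(f,\alpha),
\]
and hence equality. I will aim to prove the inclusion at every point of $m(f,\alpha)$; the ``almost everywhere'' qualifier then holds trivially.

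Fix $t\in m(f,\alpha)$ and $\tau\in\gamma$ with $|t-\tau|=2\varepsilon$ small. Write $\varphi(\zeta)=f(\zeta)-f(t)$, so that $\varphi(t)=0$ and $|\varphi(\zeta)|\le \eta(|\zeta-t|)\,|\zeta-t|^\alpha$, where $\eta(\xi)\to0$ as $\xi\to0$; we may take $\eta$ non-increasing after replacing it by a suitable majorant. Since $\mathcal{S}_\gamma$ maps constants to constants, $\mathcal{S}_\gamma f(\tau)-\mathcal{S}_\gamma f(t)=\mathcal{S}_\gamma\varphi(\tau)-\mathcal{S}_\gamma\varphi(t)$. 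From the definition,
\[
\mathcal{S}_\gamma\varphi(\tau)-\mathcal{S}_\gamma\varphi(t)=2\int_\gamma \big(e_\tau(\zeta)-e_t(\zeta)\big)n_q(\zeta)\varphi(\zeta)\,ds-2\varphi(\tau)\int_\gamma e_\tau(\zeta)n_q(\zeta)\,ds+\varphi(\tau).
\]
I will split $\gamma$ into the near part $\gamma_{3\varepsilon}(t)$ and the far part $\gamma\setminus\gamma_{3\varepsilon}(t)$, handling the constant $\varphi(\tau)$ separately.

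\textbf{Near part.} I will bound $|e_t(\zeta)|,|e_\tau(\zeta)|$ by $c|\zeta-t|^{-1}$ and $c|\zeta-\tau|^{-1}$, using the property $|w(z_2)-w(z_1)|^{-1}\le c|z_2-z_1|^{-1}$ and the boundedness of $\partial_\zeta w$.
\begin{itemize}
\item For the $e_t$ term, Lemma \ref{lem_g} with $g(\xi)=\eta(\xi)\xi^{\alpha-1}$, together with regularity ($\theta_t(\xi)\le c\xi$) and an integration by parts, gives $\int_0^{3\varepsilon}\eta(\xi)\xi^{\alpha-1}d\theta_t(\xi)\le c\,\eta(3\varepsilon)\varepsilon^\alpha$ up to terms that are $o(\varepsilon^\alpha)$.
\item For the $e_\tau$ term, I will not treat $\varphi(\zeta)$ alone. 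Instead I combine it with the $\varphi(\tau)$-terms and write the integrand as $e_\tau(\zeta)n_q(\zeta)(\varphi(\zeta)-\varphi(\tau))$ over $\gamma_{3\varepsilon}(t)\setminus\gamma_\varepsilon(\tau)$ and over $\gamma_\varepsilon(\tau)$.
\item On $\gamma_{4\varepsilon}(t)$ we have $|\varphi(\zeta)-\varphi(\tau)|\le |\varphi(\zeta)|+|\varphi(\tau)|\le c\,\eta(4\varepsilon)\varepsilon^\alpha$. This bound times $\int|e_\tau|\,ds$ over an annulus of bounded ratio is $O(1)$ by Lemma \ref{lem_g}, but near $\tau$ it produces a logarithm.
\item To avoid the logarithm, on $\gamma_\varepsilon(\tau)$ I will instead interpolate: $|\varphi(\zeta)-\varphi(\tau)|\le\min\{c_f|\zeta-\tau|^\alpha,\;2\eta(4\varepsilon)\varepsilon^\alpha\}$. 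Cutting at radius $\delta=\varepsilon\,\eta(4\varepsilon)^{1/\alpha}$ gives a contribution $O(\delta^\alpha)+O(\eta\varepsilon^\alpha\log(\varepsilon/\delta))=o(\varepsilon^\alpha)$.
\end{itemize}

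\textbf{Far part.} The leftover constant terms are $\varphi(\tau)$ times $\int e_\tau n_q\,ds$ over the complementary sets. By Lemma \ref{lem_inte_tsup} and Corollary \ref{lem_inte_t} these integrals are bounded, so the terms are $O(|\varphi(\tau)|)=o(\varepsilon^\alpha)$. For $\zeta\notin\gamma_{3\varepsilon}(t)$, I will use the kernel difference estimate
\[
|e_\tau(\zeta)-e_t(\zeta)|\le c\,\varepsilon\,|\zeta-t|^{-2},
\]
which follows from the Lipschitz bounds on $w$ and its derivatives. The far integral is then at most $c\,\varepsilon\int_{3\varepsilon}^d\eta(\xi)\xi^{\alpha-2}\,d\theta_t(\xi)$. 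Splitting at $\xi=\sqrt\varepsilon$ (or any $\xi_0$ with $\eta(\xi_0)$ small) and using $\alpha<1$ gives $c\,\varepsilon\big(\eta(3\varepsilon)\varepsilon^{\alpha-1}+\sup_{\xi\ge\xi_0}\eta\cdot\varepsilon^{\alpha-1}\big)$ plus terms of order $\varepsilon\,\xi_0^{\alpha-1}$, and together these are $o(\varepsilon^\alpha)$. This is where $\alpha<1$ enters.

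I expect the main obstacle to be the near-$\tau$ piece, where only the crude Hölder bound on $f$ is available at $\tau$ and the $o$-condition holds only at $t$. Naively it produces $\eta\,\varepsilon^\alpha\log$, and the cutoff interpolation above is the fix I would try first. A second delicate point is the kernel difference estimate for the Douglis kernel. Its numerator $\partial_\zeta w(\zeta)$ does not depend on $t,\tau$, so only the denominators need comparison: $(w(\zeta)-w(\tau))^{-1}-(w(\zeta)-w(t))^{-1}=(w(\tau)-w(t))\big[(w(\zeta)-w(\tau))(w(\zeta)-w(t))\big]^{-1}$. The stated bounds on $w$ then give $c\varepsilon|\zeta-t|^{-2}$, but the algebraic norm is only submultiplicative up to a constant, and this has to be checked in $\mathbb D$.
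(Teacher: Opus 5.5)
Your argument is correct. For the inclusion $m(f,\alpha)\subset m(\mathcal{S}_\gamma f,\alpha)$ it takes a genuinely different route from the paper. The reverse inclusion, via $\mathcal{S}_\gamma^2=I$ and Theorem~\ref{bounded operator}, is the same in both.

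The paper does no decomposition. It asserts that $t\in m(f,\alpha)$ yields $\omega_f(|t-\tau|)\le\varepsilon|t-\tau|^\alpha$, then plugs this into the global bound $c\bigl(\int_0^{h}\omega_f(\xi)\xi^{-1}d\xi+h\int_h^d\omega_f(\xi)\xi^{-2}d\xi+\omega_f(h)\bigr)$, $h=|t-\tau|$, taken from the proof of boundedness. That is short, but the key step is not justified. $\omega_f(h)$ dominates $\sup\{|f(x)-f(y)|: x,y\in\gamma,\ |x-y|\le h\}$, a supremum over all pairs on $\gamma$. An $o$-condition at the single point $t$ therefore does not control it: take $f$ constant near $t$ and exactly $\alpha$-H\"older somewhere else.

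Your local splitting is what lets you use the hypothesis only at $t$ and the global constant $c_f$ elsewhere:
\begin{itemize}
\item The $e_t$-part on $\gamma_{3\varepsilon}(t)$ is $O(\eta(3\varepsilon)\varepsilon^\alpha)$.
\item On $\gamma_\varepsilon(\tau)$ only $|f(\zeta)-f(\tau)|\le\min\{c_f|\zeta-\tau|^\alpha,\,c\,\eta\,\varepsilon^\alpha\}$ is available. Your cutoff $\delta=\varepsilon\eta^{1/\alpha}$ gives $O\bigl(\eta\,\varepsilon^\alpha(1+\ln(1/\eta))\bigr)=o(\varepsilon^\alpha)$.
\item The far part uses the kernel-difference bound, which is the same identity the paper uses for $I_1$ in Theorem~\ref{th:main}, together with $\alpha<1$. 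This is where that hypothesis really enters; the paper's bound ``$\le c\,\omega_f(|t-\tau|)$'' uses it only tacitly.
\end{itemize}
Since your estimate holds at every point of $m(f,\alpha)$, you obtain equality of the two sets everywhere, not just almost everywhere.

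A few minor points to tidy up:
\begin{itemize}
\item The majorant should be non-decreasing, not non-increasing, e.g.\ $\eta(\xi)=\sup_{0<|\zeta-t|\le\xi}|f(\zeta)-f(t)|\,|\zeta-t|^{-\alpha}\le c_f$. Bound $\eta(|\zeta-t|)\le\eta(3\varepsilon)$ before applying Lemma~\ref{lem_g} with the non-increasing $g(\xi)=\xi^{\alpha-1}$.
\item In the far part the small factor is $\eta(\xi_0)=\sup_{\xi\le\xi_0}\eta(\xi)$, not a supremum over $\xi\ge\xi_0$. With $\xi_0=\sqrt{\varepsilon}$ the remainder is $O(\varepsilon^{(1+\alpha)/2})=o(\varepsilon^\alpha)$.
\item Your first displayed identity is only formal, since $\int_\gamma e_\tau n_q\varphi\,ds$ and $\int_\gamma e_\tau n_q\,ds$ need not exist on a merely regular curve. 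Because you recombine them into $e_\tau n_q(\varphi(\zeta)-\varphi(\tau))$ on $\gamma_{3\varepsilon}(t)$, and split off $\varphi(\tau)\int e_\tau n_q\,ds$ only on $\gamma\setminus\gamma_{3\varepsilon}(t)$, every integral you actually estimate converges absolutely.
\item Your worry about the norm is unnecessary: $|ab|=\sum_k\bigl|\sum_{i+j=k}a_ib_j\bigr|\le|a|\,|b|$, and $\mathbb{D}$ is commutative. So $(w(\zeta)-w(\tau))^{-1}-(w(\zeta)-w(t))^{-1}=(w(\tau)-w(t))(w(\zeta)-w(\tau))^{-1}(w(\zeta)-w(t))^{-1}$ gives $|e_\tau(\zeta)-e_t(\zeta)|\le c\,\varepsilon|\zeta-t|^{-2}$ directly.
\end{itemize}
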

	
\begin{proof} 
Firstly we will prove
\[ 
m(f, \alpha )  \subset  m(\mathcal{S}_{\gamma} f, \alpha ).
\]
Let \( t \in m(f, \alpha) \). Then, for any \( \varepsilon > 0 \), there exists \( \delta = \delta(\varepsilon, t) \) such that
\[
|t - \tau | < \delta \Rightarrow |f(t) - f(\tau )| \leq \varepsilon |t - \tau |^{\alpha}.
\]

This yields $\omega_f(|t-\tau |) \leq \varepsilon |t - \tau |^{\alpha}$.

The following estimations were described in the proof of Theorem \ref{bounded operator}.
\begin{align*}
|\mathcal{S}_{\gamma} f(t)  -  \mathcal{S}_{\gamma} f(\tau)|
&\leq
c\left(     \int_0^{|t-\tau|} \dfrac{\omega_f (\xi)}{\xi}d\xi  +   |t-\tau| \int_{|t-\tau|}^d \dfrac{\omega_f (\xi)}{\xi^2}d\xi   +   \omega_f (|t-\tau|)    \right)
\\
& \leq c \omega_f (|t-\tau|)
\end{align*}
So we get
\[
\lim_{\tau\to t} \dfrac{|\mathcal{S}_\gamma f(t) - \mathcal{S}_\gamma f(\tau)|}{|t-\tau|^{\alpha}} \leq c\varepsilon ,
\]
and if $\varepsilon\to 0$ the limit gives the following result
\[
\lim_{\tau\to t} \dfrac{|\mathcal{S}_\gamma f(t) - \mathcal{S}_\gamma f(\tau )|}{|t-\tau|^{\alpha}} = 0 ,
\]
which means $|\mathcal{S}_\gamma f(t) - \mathcal{S}_\gamma f(\tau)| = o(|t-\tau|^{\alpha})$. Therefore we get the first contention.
\\
For the another contention we mention that in the Hölder space $H_{\alpha }(\gamma )$ is valid the equality \ref{involution property} $(\mathcal{S}_{\gamma}^2 = I)$ which allows to conclude that
\[
m(\mathcal{S}_{\gamma}f , \alpha)  \subset  m(\mathcal{S}_{\gamma}^2 f , \alpha)  =    m(f, \alpha)  .
\]
	
Therefore \( m(f, \alpha) = m(\mathcal{S}_\gamma f, \alpha) \) almost everywhere.
\end{proof}	

\section{Local Estimates}
In this section, we study the local estimates of the hypercomplex singular Cauchy integral, based on \cite{MamedkhanovJafarov2023}, emphasizing their essential role in understanding the behavior of hyperanalytic functions near particular points. The next Lemma clarifies the conditions under which these local properties appear. 
\begin{lemma}\label{lem aux}
Let $\gamma$ be a regular Jordan curve and $t_0\in\gamma$. Let $ 0<\alpha\le 1,\ \beta\ge 0$ and $\varepsilon=\dfrac{1}{2\,d^{(\alpha+\beta)/2}}$. Then
$f\in H_{\alpha}^{\,\alpha+\beta}(t_0)$ if and only if the following conditions hold:
\begin{enumerate}
\item \label{it:1}
\[
|f(\zeta)-f(t)| \le c\,|\zeta-t|^{\alpha},
\qquad
\text{for }\zeta\in \gamma_{\varepsilon |t-t_0|^{1+\beta/\alpha}}(t),
\]
\item \label{it:2}
\[
|f(\zeta)-f(t)| \le c\,|t-t_0|^{\alpha+\beta},
\qquad
\text{for } \zeta\in\gamma_{\frac{3}{2}|t-t_0|}(t_0)\setminus
\gamma_{\varepsilon |t-t_0|^{1+\beta/\alpha}}(t),
\]
\item \label{it:3}
\[
|f(\zeta)-f(t)| \le c\,|\zeta-t|^{\alpha+\beta},
\qquad
\text{for }\zeta\in \gamma\setminus\gamma_{\frac{3}{2}|t-t_0|}(t_0).
\]
\end{enumerate}
\end{lemma}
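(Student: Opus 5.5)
Since the class $H_{\alpha}^{\,\alpha+\beta}(t_0)$ is not defined in the part of the paper given here, I will work with the standard definition from \cite{MamedkhanovJafarov2023}, which I take to be: $f\in H_\alpha(\gamma)$ together with the pointwise condition $|f(t)-f(t_0)|\le c\,|t-t_0|^{\alpha+\beta}$ for all $t\in\gamma$. This is an assumption on my part; if the paper uses a different definition, the plan below has to be adjusted accordingly. With this definition the lemma becomes a purely metric statement, and I would prove the two implications separately. Each one reduces to the triangle inequality and elementary comparisons of $|\zeta-t|$, $|\zeta-t_0|$ and $|t-t_0|$ in the three regions of the statement.

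\emph{Necessity.} Suppose $f\in H_{\alpha}^{\,\alpha+\beta}(t_0)$.
\begin{enumerate}
\item Condition (\ref{it:1}) is simply the global Hölder property of $f$.
\item For (\ref{it:2}), take $\zeta\in\gamma_{\frac32|t-t_0|}(t_0)$ and split
\[
|f(\zeta)-f(t)|\le |f(\zeta)-f(t_0)|+|f(t_0)-f(t)|\le c\bigl(|\zeta-t_0|^{\alpha+\beta}+|t-t_0|^{\alpha+\beta}\bigr).
\]
Since $|\zeta-t_0|\le\frac32|t-t_0|$, the right-hand side is at most $c\,|t-t_0|^{\alpha+\beta}$.
\item For (\ref{it:3}), let $|\zeta-t_0|>\frac32|t-t_0|$. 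Then $|\zeta-t|\ge|\zeta-t_0|-|t-t_0|\ge\frac13|\zeta-t_0|$, and also $|t-t_0|<\frac23|\zeta-t_0|\le 2|\zeta-t|$. The same triangle-inequality split then bounds both terms by $c\,|\zeta-t|^{\alpha+\beta}$.
\end{enumerate}

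\emph{Sufficiency.} Assume (\ref{it:1})--(\ref{it:3}). First I recover the global Hölder bound $|f(\zeta)-f(t)|\le c|\zeta-t|^\alpha$ for all $t,\zeta\in\gamma$, using the three regions in turn.
\begin{enumerate}
\item In the first region the bound is exactly (\ref{it:1}).
\item In the second region we have $|\zeta-t|>\varepsilon|t-t_0|^{1+\beta/\alpha}$. Raising to the power $\alpha$ gives $|t-t_0|^{\alpha+\beta}<\varepsilon^{-\alpha}|\zeta-t|^{\alpha}$, so (\ref{it:2}) yields a Hölder bound.
\item In the third region, $|\zeta-t|^{\alpha+\beta}\le d^{\beta}|\zeta-t|^\alpha$, because $\beta\ge0$ and $|\zeta-t|\le d$.
\end{enumerate}
Next I obtain the pointwise condition at $t_0$ by taking $\zeta=t_0$. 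The point $t_0$ always lies in $\gamma_{\frac32|t-t_0|}(t_0)$, so it suffices to check that $t_0\notin\gamma_{\varepsilon|t-t_0|^{1+\beta/\alpha}}(t)$. This means $|t-t_0|>\varepsilon|t-t_0|^{1+\beta/\alpha}$, i.e. $\varepsilon|t-t_0|^{\beta/\alpha}<1$. Once this holds, (\ref{it:2}) with $\zeta=t_0$ gives $|f(t_0)-f(t)|\le c|t-t_0|^{\alpha+\beta}$.

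\emph{Main obstacle.} The only nonroutine point is the inequality $\varepsilon|t-t_0|^{\beta/\alpha}<1$, which must follow from the specific choice $\varepsilon=\frac{1}{2d^{(\alpha+\beta)/2}}$ and $|t-t_0|\le d$. A clean argument goes through whenever $d^{\beta/\alpha}<2d^{(\alpha+\beta)/2}$. If that fails for some values of $d$, I would fall back on a chaining argument instead. Pick a point $\zeta'$ on $\gamma$ at distance about $\frac12|t-t_0|$ from $t_0$; such a point exists by continuity of $\gamma$. Then estimate $f(t)-f(\zeta')$ by (\ref{it:2}) and $f(\zeta')-f(t_0)$ by the Hölder bound already established, or use induction along dyadic scales. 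Either route still gives the decay $|t-t_0|^{\alpha+\beta}$ up to a constant. The case $t=t_0$ is trivial and is excluded throughout, so the shrinking neighbourhoods are nondegenerate.
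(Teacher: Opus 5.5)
The paper states this lemma without proof (it is taken from Mamedkhanov--Jafarov) and never defines $H_{\alpha}^{\alpha+\beta}(t_0)$, so there is no paper proof to compare against. Your reading ($f\in H_\alpha(\gamma)$ together with $|f(t)-f(t_0)|\le c|t-t_0|^{\alpha+\beta}$) is consistent with how the class is used in the proof of Theorem~\ref{th:main}. Under that reading, your necessity argument is correct, and so is your recovery of the global $\alpha$-H\"older bound from (1)--(3).

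The gap is in the final step, the pointwise bound at $t_0$. Your worry is justified. With $\varepsilon=\frac{1}{2d^{(\alpha+\beta)/2}}$ the inequality $\varepsilon|t-t_0|^{\beta/\alpha}<1$ can fail (e.g.\ $\alpha=\frac12$, $\beta=1$, $d$ large and $|t-t_0|\ge d/2$). Then $t_0\in\gamma_{\varepsilon|t-t_0|^{1+\beta/\alpha}}(t)$, and (2) cannot be used with $\zeta=t_0$. However, your fallback does not handle this case as written:
\begin{itemize}
\item In that regime the excluded ball around $t$ has radius at least $|t-t_0|$. Your auxiliary point $\zeta'$ satisfies $|\zeta'-t|\le\frac32|t-t_0|$, so it can lie inside that ball, and then (2) is not available for the pair $(t,\zeta')$.
\item The H\"older estimate for $f(\zeta')-f(t_0)$ only gives $c|t-t_0|^{\alpha}$, and you never convert this into $c|t-t_0|^{\alpha+\beta}$.
\item The ``induction along dyadic scales'' is not specified, so the claim that either route yields the decay is unsupported.
\end{itemize}

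The missing observation is short. If $t_0\in\gamma_{\varepsilon|t-t_0|^{1+\beta/\alpha}}(t)$, then $|t-t_0|\le\varepsilon|t-t_0|^{1+\beta/\alpha}$, i.e.\ $\varepsilon^{\alpha}|t-t_0|^{\beta}\ge 1$. Condition (1) with $\zeta=t_0$ then gives
\[
|f(t)-f(t_0)|\le c\,|t-t_0|^{\alpha}\le c\,\varepsilon^{\alpha}|t-t_0|^{\alpha+\beta}.
\]
For $\beta>0$ this just says the bad case occurs only when $|t-t_0|\ge\varepsilon^{-\alpha/\beta}$, where the $\alpha$-H\"older bound already suffices.

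Alternatively, you can avoid (2) and $\varepsilon$ entirely. Fix $t\neq t_0$ and take $\tau\in\gamma$ with $0<|\tau-t_0|<\frac23|t-t_0|$. Then $t$ lies outside $\gamma_{\frac32|\tau-t_0|}(t_0)$, so (3) with base point $\tau$ gives $|f(t)-f(\tau)|\le c|t-\tau|^{\alpha+\beta}$. Letting $\tau\to t_0$, and using the continuity of $f$ you have already established, yields $|f(t)-f(t_0)|\le c|t-t_0|^{\alpha+\beta}$. With either addition your proof is complete.
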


\begin{theorem}\label{th:main}
Let $\gamma$ be a regular Jordan curve, fix $t_0 \in \gamma$, $f \in H_{\alpha}^{\alpha+\beta}(t_0)$, $0<\alpha\le 1$, and $\beta \ge 0$.
Then, for the singular integral $S_{\gamma}f(t)$, the following inequality is true:
\[
\bigl| \mathcal{S}_{\gamma}f(t)  -  \mathcal{S}_{\gamma}f(t_0)\bigr|
\le c  \cdot 
\begin{cases}
|t-t_0|^{\alpha+\beta}\ln\!\frac{2d}{|t-t_0|},\ \text{ for}\ \alpha+\beta \le 1\\ \\
|t-t_0|,\ \text{ for}\ \alpha+\beta > 1
\end{cases}
\]
\end{theorem}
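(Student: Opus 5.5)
We follow the classical scheme for local estimates of singular integrals (as in \cite{MamedkhanovJafarov2023}), adapted to the Douglis kernel. Fix $t\in\gamma$ with $h:=|t-t_0|$ small and write, from the definition of $\mathcal{S}_\gamma$,
\[
\mathcal{S}_\gamma f(t)-\mathcal{S}_\gamma f(t_0)=2\int_\gamma e_t(\zeta)n_q(\zeta)(f(\zeta)-f(t))\,ds-2\int_\gamma e_{t_0}(\zeta)n_q(\zeta)(f(\zeta)-f(t_0))\,ds+f(t)-f(t_0).
\]
The last term is bounded by $c\,h^{\alpha+\beta}$, since condition (\ref{it:2}) or (\ref{it:3}) of Lemma \ref{lem aux} applied at $\zeta=t_0$ gives $|f(t_0)-f(t)|\le c h^{\alpha+\beta}$. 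For the integrals we split $\gamma$ into three pieces, following Lemma \ref{lem aux}: $A=\gamma_{\varepsilon h^{1+\beta/\alpha}}(t)$, $B=\gamma_{\frac32 h}(t_0)\setminus A$, and $C=\gamma\setminus\gamma_{\frac32 h}(t_0)$.

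\emph{Step 1 (near pieces $A$ and $B$).} The two integrals are estimated separately there, using $|e_t(\zeta)|\le c|\zeta-t|^{-1}$. This bound follows from $|w(\zeta)-w(t)|^{-1}\le c|\zeta-t|^{-1}$ together with the boundedness of $\partial_\zeta w$ and $n_q$. On $A$, condition (\ref{it:1}) and Lemma \ref{lem_g} with regularity ($d\theta_t(\xi)$ comparable to $d\xi$ after integrating by parts) give
\[
\int_A|\zeta-t|^{\alpha-1}ds\le c\,(\varepsilon h^{1+\beta/\alpha})^\alpha=c\,h^{\alpha+\beta}.
\]
On $B$, condition (\ref{it:2}) gives $|f(\zeta)-f(t)|\le ch^{\alpha+\beta}$. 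Then
\[
\int_B|\zeta-t|^{-1}ds\le c\ln\frac{3h}{\varepsilon h^{1+\beta/\alpha}}\le c\ln\frac{2d}{h},
\]
because $h^{-\beta/\alpha}$ only contributes a multiple of $\ln(1/h)$. For the $t_0$-integral over $A\cup B\subset\gamma_{3h}(t_0)$ we use $|f(\zeta)-f(t_0)|\le c|\zeta-t_0|^{\alpha+\beta}$, valid when $\alpha+\beta\le1$ and obtained again from the lemma. This gives $c h^{\alpha+\beta}$, or $c h$ if the exponent is capped at $1$ in the case $\alpha+\beta>1$.

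\emph{Step 2 (far piece $C$).} Here we write $f(\zeta)-f(t)=(f(\zeta)-f(t_0))+(f(t_0)-f(t))$, so the integrand becomes
\[
(e_t-e_{t_0})(\zeta)\,n_q(\zeta)(f(\zeta)-f(t_0))+e_t(\zeta)n_q(\zeta)(f(t_0)-f(t)).
\]
The second term is controlled by $c\,h^{\alpha+\beta}$, using Corollary \ref{lem_inte_t} (or Lemma \ref{lem_inte_tsup}) for the boundedness of the truncated kernel integral over $C$. For the first term we need the kernel difference estimate
\[
|e_t(\zeta)-e_{t_0}(\zeta)|\le c\,\frac{h}{|\zeta-t_0|^2},\qquad \zeta\in C.
\]
It comes from
\[
\frac{1}{w(\zeta)-w(t)}-\frac{1}{w(\zeta)-w(t_0)}=\frac{w(t)-w(t_0)}{(w(\zeta)-w(t))(w(\zeta)-w(t_0))}
\]
and the two-sided Lipschitz bounds on $w$, noting that $|\zeta-t|\ge\frac12|\zeta-t_0|$ on $C$; for hypercomplex values the inverse exists since the complex parts are nonzero. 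Combined with condition (\ref{it:3}), in the form $|f(\zeta)-f(t_0)|\le c|\zeta-t_0|^{\alpha+\beta}$, and with Lemma \ref{lem_g}, this yields
\[
c\,h\int_{3h/2}^{d}\xi^{\alpha+\beta-2}\,d\theta_{t_0}(\xi).
\]
This is $\le c\,h^{\alpha+\beta}$ for $\alpha+\beta<1$, $\le c\,h\ln\frac{2d}{h}$ for $\alpha+\beta=1$, and $\le c\,h$ for $\alpha+\beta>1$. In the case $\alpha+\beta>1$, we must also replace the $B$-estimate of Step 1 by a bound of order $h$. For this we use that $f$ then behaves like a Lipschitz function near $t_0$, i.e.\ we bound differences by $c\,h$ instead of $h^{\alpha+\beta}$ with its logarithm.

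\emph{Main obstacle.} The delicate point is the case $\alpha+\beta>1$. There the claimed bound $c|t-t_0|$ has no logarithm, yet the piece $B$ naively produces $h^{\alpha+\beta}\ln(1/h)$, or $h\ln(1/h)$ after capping. Since $h^{\alpha+\beta}\ln\frac{2d}{h}\le c\,h$ when $\alpha+\beta>1$, the naive bound is in fact sufficient, provided the $t_0$-integral is also treated with the uncapped exponent, i.e.\ $\int_{\gamma_{3h}(t_0)}|\zeta-t_0|^{\alpha+\beta-1}ds\le c h^{\alpha+\beta}\le c h$. I would therefore first check that every term is at most $c\,h^{\alpha+\beta}\ln\frac{2d}{h}$ in all cases, and only at the end use $h^{\alpha+\beta}\ln\frac{2d}{h}\le c\,h$ for $\alpha+\beta>1$, with the far integral contributing exactly $c\,h$. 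The second technical point, applying Lemma \ref{lem_g} via integration by parts with $\theta_x(\xi)\le c\xi$, is routine.
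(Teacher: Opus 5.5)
Your proposal is correct and is essentially the paper's proof. Your pieces $A$ and $B$, the splitting of the $C$-integral into the kernel-difference term and the constant term, the $t_0$-integral over $\gamma_{\frac32 h}(t_0)$, and $f(t)-f(t_0)$ are exactly the paper's $I_4$, $I_3$, $I_1$, $I_2$, $I_5$, $I_6$, and you estimate them the same way.

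Your closing remark that $h^{\alpha+\beta}\ln\frac{2d}{h}\le c\,h$ for $\alpha+\beta>1$ is what absorbs the $B$-term in that case; the paper leaves this implicit in its last line. So drop the Lipschitz-type replacement suggested at the end of Step 2, which would only give $c\,h\ln\frac{2d}{h}$. Two minor fixes: on $C$ you only have $|\zeta-t|\ge\frac13|\zeta-t_0|$, which is harmless. Also, Corollary \ref{lem_inte_t} reaches $C$ only after comparing it with $\gamma\setminus\gamma_{h/2}(t)$: the difference $\gamma_{\frac32 h}(t_0)\setminus\gamma_{h/2}(t)$ has measure at most $c\,h$, and there $|e_t|\le c/h$.
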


\begin{proof}
Let $t\in\gamma$, $t\neq t_0$, and $\varepsilon=\tfrac12 d^{-\frac{\alpha+\beta}{2}}$.  
Then the following identity holds:
\[
\begin{aligned}
	\mathcal{S}_\gamma f(t) - \mathcal{S}_\gamma f(t_0 )&= \dfrac{w(t) - w(t_0)}{\pi} \int \limits_{\gamma \setminus \gamma_{\frac{3}{2} |t-t_0|}(t_0)} \dfrac{\partial_\zeta w(\zeta ) n_q (\zeta ) (f(\zeta ) - f(t_0))}{(w(\zeta )-w(t))(w(\zeta )-w(t_0))} ds 
    \\
	&+ 2\Bigg[ (f(t_0)-f(t))\int \limits_{\gamma\setminus\gamma_{\frac{3}{2}|t-t_0|}(t_0)} e_{t}(\zeta)n_q(\zeta)\, ds
    \\
    &+\left(\int \limits_{\gamma_{\frac{3}{2}|t-t_0|}(t_0)\setminus \gamma_{\varepsilon |t-t_0|^{1+\beta /\alpha}}(t)} + \int \limits_{\gamma_{\varepsilon |t-t_0|^{1+\beta /\alpha}}(t)}
	\right) e_{t}(\zeta)n_q(\zeta)\Big(f(\zeta) - f(t)\Big) \, ds
    \\
	&- \int \limits_{\gamma_{\frac{3}{2}|t-t_0|}(t_0)}e_{t_0}(\zeta)n_q(\zeta) \Big(f(\zeta) - f(t_0)\Big) \, ds\Bigg] + (f(t) - f(t_0)) \stackrel{\text{def}}{=} \sum_{i=1}^6I_i.
\end{aligned}
\]
Let us first estimate $I_1.$ We note for $\zeta\in \gamma\setminus\gamma_{\frac{3}{2}|t-t_0|}(t_0),$ then $|\zeta-t_0|>\frac32 |t-t_0|$ and 
\[
|\zeta-t|\ge |\zeta-t_0|-|t-t_0|\ge |\zeta-t_0|-\frac23 |\zeta-t_0|=\frac13 |\zeta-t_0|.
\]
Since $f\in H_{\alpha}^{\alpha+\beta}(t_0),$ then $|f(\zeta )-f(t_0)|\le c|\zeta - t_0|^{\alpha+\beta},$ therefore,
\begin{equation}\label{eq:I_1estimation}
\begin{aligned}
|I_1|& \leq \dfrac{|w(t) - w(t_0)|}{\pi} \int \limits_{\gamma \setminus \gamma_{\frac{3}{2} |t-t_0|}(t_0)} \dfrac{|\partial_\zeta w(\zeta )|\, |n_q (\zeta )|\, |f(\zeta ) - f(t_0)|}{|w(\zeta )-w(t)|\, |w(\zeta )-w(t_0)|} |ds| 
\\
& \leq  \dfrac{c}{\pi} |t - t_0| \int \limits_{\gamma \setminus \gamma_{\frac{3}{2} |t-t_0|}(t_0)} \dfrac{|f(\zeta ) - f(t_0)|}{|\zeta - t|\, |\zeta - t_0|} |ds|
\\
& \leq \dfrac{3c}{\pi} |t - t_0| \int \limits_{\gamma \setminus \gamma_{\frac{3}{2} |t-t_0|}(t_0)} \dfrac{|f(\zeta ) - f(t_0)|}{|\zeta - t_0|^2} |ds|
\\
& \leq c |t - t_0| \int \limits_{\gamma \setminus \gamma_{\frac{3}{2} |t-t_0|}(t_0)} |\zeta - t_0|^{\alpha + \beta -2} |ds| .
\end{aligned}
\end{equation}
By Lemma \ref{lem_g} we get
\[
|I_1 | \leq c |t-t_0 | \int \limits_{\frac{3}{2} |t-t_0|}^d \xi^{\alpha + \beta -2} d\xi  .
\]
Then we have two cases:
\begin{enumerate}
    \item If $\alpha + \beta < 2$, then 
			 \begin{align*}
		|I_1|	    
        & \leq c   |t-t_0 | \int \limits_{\frac{3}{2} |t-t_0 |}^d  (\xi+|t-t_0 |)^{\alpha + \beta -2}  d\xi
        \leq c   |t-t_0 | \int \limits_0^d  (\xi+|t-t_0 |)^{\alpha + \beta -2}  d\xi
        \\
        & \leq c
        \begin{cases}
                |t-t_0 |^{\alpha + \beta} ,  & \text{if} \quad \alpha + \beta <1 \\
                |t-t_0 | \ln{\dfrac{2d}{|t-t_0 |}} , & \text{if} \quad \alpha + \beta = 1  \\
                |t-t_0 | , & \text{if} \quad 1 < \alpha + \beta < 2  .
        \end{cases}
        \end{align*}
    \item If $\alpha + \beta \geq 2$, then   
    \begin{align*}
    |I_1 | &\leq c|t-t_0 | \int \limits_{0}^d  \xi^{\alpha + \beta -2}  d\xi    \leq c|t-t_0 | \int \limits_0^d  d^{\alpha + \beta -2} d\xi
    \leq c |t-t_0 |d^{\alpha+ \beta -1}    \leq  c |t-t_0 |	.
    \end{align*}
\end{enumerate}
Thus $I_1 $ is bounded as follows,
\begin{align*}
        |I_1 |  
         \leq c
        \begin{cases}
                |t-t_0 |^{\alpha + \beta} ,  & \text{if} \quad \alpha + \beta <1 \\ 
                |t-t_0 | \ln{\dfrac{2d}{|t-t_0 |}} , & \text{if} \quad \alpha + \beta = 1  \\
                |t-t_0 | , & \text{if} \quad  \alpha + \beta > 1  .
        \end{cases}
        \end{align*}
We estimate $I_2$ by noting that
\[
\int \limits_{\gamma\setminus \gamma_{\frac{3}{2}|t-t_0 |}(t_0 )} e_t (\zeta ) n_q (\zeta ) ds
= 
\int \limits_{\gamma\setminus \gamma_{\frac{|t-t_0 |}{2}}(t)} e_t (\zeta ) n_q (\zeta ) ds
-
\int \limits_{\gamma_{\frac{3}{2}|t-t_0 |}(t_0 )\setminus\gamma_{\frac{|t-t_0|}{2}}(t)} e_t (\zeta ) n_q (\zeta ) ds	.
\]
Applying Lemma \ref{lem_inte_t} to the first integral on the right-hand side of the above equality, we obtain
\[
\left|   \int \limits_{\gamma\setminus \gamma_{\frac{|t-t_0 |}{2}}(t)} e_t (\zeta ) n_q (\zeta ) ds     \right|    \leq c    .
\]
Let us estimate the second integral on the right-hand side of the above equality by noting first that 
\[
|\zeta - t|  \geq  \dfrac{|t - t_0|}{2}   ,
\]
then
\begin{align*}
\left| \, \int \limits_{\gamma_{\frac{3}{2}|t-t_0 |}(t_0 )\setminus\gamma_{\frac{|t-t_0|}{2}}(t)} e_t (\zeta) n_q(\zeta)ds  \right|
& = \left| \, \int \limits_{\gamma_{\frac{3}{2}|t-t_0 |}(t_0 )\setminus\gamma_{\frac{|t-t_0|}{2}}(t)} \dfrac{1}{2\pi} \dfrac{\partial_{\zeta}w(\zeta)}{w(\zeta) - w(t)} n_q(\zeta)ds  \right|
\\
& \leq \int \limits_{\gamma_{\frac{3}{2}|t-t_0 |}(t_0 )\setminus\gamma_{\frac{|t-t_0|}{2}}(t)} \dfrac{1}{2\pi} \left|\dfrac{\partial_{\zeta}w(\zeta)}{w(\zeta) - w(t)} \right| |n_q(\zeta)| \, |ds|
\\
& \leq
 c \int \limits_{\gamma_{\frac{3}{2}|t-t_0 |}(t_0 )\setminus\gamma_{\frac{|t-t_0|}{2}}(t)} \dfrac{|ds|}{|w(\zeta) - w(t)|}
\\
& \leq
 c \int \limits_{\gamma_{\frac{3}{2}|t-t_0 |}(t_0 )\setminus\gamma_{\frac{|t-t_0|}{2}}(t)} \dfrac{|ds|}{|\zeta - t|}
\\
& \leq
 c \int \limits_{\gamma_{\frac{3}{2}|t-t_0 |}(t_0 )\setminus\gamma_{\frac{|t-t_0|}{2}}(t)} \dfrac{2|ds|}{|t - t_0|}
 \\
& \leq
\dfrac{c}{|t-t_0|} \int \limits_{\gamma_{\frac{3}{2}|t-t_0 |}(t_0 )\setminus\gamma_{\frac{|t-t_0|}{2}}(t)} |ds|
\\ 
&\leq 
\dfrac{c}{|t-t_0|} mes (\gamma_{\frac{3}{2}|t-t_0|}(t_0)) \leq c	.
\end{align*}
So, in conclusion we get
\[
\left| \, \int \limits_{\gamma\setminus \gamma_{\frac{3}{2}|t-t_0 |}(t_0 )} e_t (\zeta) n_q(\zeta)ds  \right|    \leq  c.
\]
Therefore, for any $\alpha$, $\beta$ both the fact $f\in H_{\alpha}^{\alpha + \beta} (t_0 )$ and the last estimations imply that
\[
|I_2| \leq  c |f(t_0) - f(t)|  \leq c|t-t_0|^{\alpha + \beta}.
\] 
Let us estimate $I_3$ by applying property 2 of Lemma \ref{lem aux} 
\begin{align*}
|I_3| 
&= 
\left|  \int \limits_{\gamma_{\frac{3}{2}|t-t_0 |}(t_0 ) \setminus \gamma_{\varepsilon |t-t_0 |^{1+\beta /\alpha}}(t)} e_t (\zeta) n_q(\zeta)(f(\zeta)-f(t))ds   \right|
\\
& \leq
\int \limits_{\gamma_{\frac{3}{2}|t-t_0 |}(t_0 ) \setminus \gamma_{\varepsilon |t-t_0 |^{1+\beta /\alpha}}(t)} \dfrac{1}{2\pi} \left| \dfrac{\partial_{\zeta}w(\zeta)}{w(\zeta) - w(t)} \right| |n_q(\zeta)| \, |f(\zeta)-f(t)| \, |ds|  
\\
&\leq 
c    \int \limits_{\gamma_{\frac{3}{2}|t-t_0 |}(t_0 ) \setminus \gamma_{\varepsilon |t-t_0 |^{1+\beta /\alpha}}(t)} \dfrac{|f(\zeta)-f(t)|}{|w(\zeta) - w(t)|}|ds|
\\
& \leq
c  |t-t_0|^{\alpha + \beta}   \int \limits_{\gamma_{\frac{3}{2}|t-t_0 |}(t_0 ) \setminus \gamma_{\varepsilon |t-t_0 |^{1+\beta /\alpha}}(t)} \dfrac{|ds|}{|\zeta-t|} .
\end{align*}
Then, it follows from Lemma \ref{lem_g} that
\begin{align*}
|I_3|
& \leq  c  |t-t_0|^{\alpha + \beta}   \int \limits_{\varepsilon |t-t_0 |^{1+\beta /\alpha}}^{\frac{3}{2}|t-t_0 |} \dfrac{d\xi}{\xi} 
=
c  |t-t_0|^{\alpha + \beta} \ln \xi \Big|_{\varepsilon |t-t_0 |^{1+\beta /\alpha}}^{\frac{3}{2}|t-t_0 |}
\\
& \leq c  |t-t_0|^{\alpha + \beta} [\ln{(2d)} - \ln{(|t-t_0 |)}]\leq c  |t-t_0|^{\alpha + \beta} \ln{\frac{2d}{|t-t_0 |}},
\end{align*}
and it is valid for any $0 <\alpha \leq 1$, $\beta\geq 0$.
\\
Let us estimate integral $I_4$. Since $\zeta\in \gamma_{\varepsilon |t-t_0|^{1+\beta /\alpha}}(t)$ according to property $1$ of Lemma \ref{lem aux} we have
\[
|f(\zeta) - f(t)| \leq c |\zeta-t|^{\alpha},
\]
then
\begin{align*}
|I_4|   
&= 
\left|  \int \limits_{\gamma_{\varepsilon |t-t_0 |^{1+\beta /\alpha}}(t)} e_t (\zeta) n_q(\zeta)(f(\zeta)-f(t))ds   \right|
\\
& \leq
\int \limits_{\gamma_{\varepsilon |t-t_0 |^{1+\beta /\alpha}}(t)} \dfrac{1}{2\pi} \left| \dfrac{\partial_{\zeta}w(\zeta)}{w(\zeta) - w(t)} \right| |n_q(\zeta)| \, |f(\zeta)-f(t)| \, |ds|  
\\
&\leq 
c    \int \limits_{\gamma_{\varepsilon |t-t_0 |^{1+\beta /\alpha}}(t)} \dfrac{|f(\zeta)-f(t)|}{|w(\zeta) - w(t)|}|ds|
\leq
c    \int \limits_{\gamma_{\varepsilon |t-t_0 |^{1+\beta /\alpha}}(t)}  |\zeta-t|^{\alpha - 1} |ds|   .
\end{align*}
By Lemma \ref{lem_g} we obtain
\[
|I_4 |    \leq   c\int \limits_{0}^{\varepsilon |t-t_0 |^{1+\beta /\alpha}} \xi^{\alpha -1} d\xi.
\]
\\
Thus for any $0 < \alpha \leq 1$, $\beta \geq 0$ we get
\begin{align*}
|I_4 |    
\leq  
\dfrac{c}{\alpha}  \left(   \varepsilon |t-t_0 |^{1+\beta /\alpha}  \right)^{\alpha}
\leq 
c    |t-t_0 |^{\alpha + \beta} .
\end{align*}
Now we estimate $I_5$. Since $f\in H_{\alpha}^{\alpha + \beta} (t_0)$ then 
\[
|f(\zeta)- f(t_0)|  \leq c  |\zeta - t_0|^{\alpha + \beta}   ,
\]
holds for any $0 <\alpha \leq 1$, $\beta\geq 0$. So
\begin{align*}
    |I_5 |   
    &=   
    \left| \,  \int \limits_{\gamma_{\frac{3}{2}|t-t_0 |}(t_0 )} e_{t_0} (\zeta) n_q(\zeta)(f(\zeta)-f(t_0 ))ds  \right|
    \\
    & \leq   
    \int \limits_{\gamma_{\frac{3}{2}|t-t_0 |}(t_0 )} \dfrac{1}{2\pi} \left|  \dfrac{\partial_{\zeta}w(\zeta)}{w(\zeta) - w(t_0)} \right| |n_q(\zeta)| \, |f(\zeta)-f(t_0 )| \, |ds| 
    \\
    & \leq 
    c \int \limits_{\gamma_{\frac{3}{2}|t-t_0 |}(t_0 )} \dfrac{|f(\zeta) - f(t_0)|}{|w(\zeta) - w(t_0)|} |ds|
    \leq c 
    \int \limits_{\gamma_{\frac{3}{2}|t-t_0 |}(z_0 )} |\zeta - t_0 |^{\alpha + \beta - 1} |ds|  
    \\
    &\leq
    c \int \limits_{0}^{\frac{3}{2}|t-t_0 |} \xi^{\alpha + \beta - 1} d\xi,
\end{align*}
where the last inequality is a consequence of Lemma \ref{lem_g}. Therefore

\begin{align*}
    |I_5 |
    & \leq   
    c \int \limits_{0}^{\frac{3}{2}|t-t_0 |} \xi^{\alpha + \beta - 1} d\xi  
     = c \left. \dfrac{\xi^{\alpha + \beta}}{\alpha + \beta} \right|_{0}^{\frac{3}{2}|t-t_0 |}
     = \dfrac{c \left( \dfrac{3}{2} \right)^{\alpha + \beta}}{\alpha + \beta} |t-t_0|^{\alpha + \beta} \leq c |t-t_0 |^{\alpha + \beta}.
\end{align*}

Finally for $I_6$ we have
\[
|I_6 |   =    |f(t) - f(t_0)|  \leq c |t-t_0 |^{\alpha + \beta}   ,
\]
is valid for any $0 <\alpha \leq 1$, $\beta\geq 0$.
        
Once we have bounded each $I_k$, for $k = 1, \ldots , 6$ we get the following estimation        
\begin{align*}
        |\mathcal{S}_{\gamma} f(t)  -  \mathcal{S}_{\gamma} f (t_0 ) |    
         \leq 
         & c  \left(  |t-t_0|^{\alpha + \beta}  +  |t-t_0|^{\alpha + \beta} \ln \dfrac{2d}{|t-t_0 |} \right)
        \\
        & + c
        \begin{cases}
                |t-t_0 |^{\alpha + \beta} ,  & \text{if} \quad \alpha + \beta <1 \\
                |t-t_0 | \ln{\dfrac{2d}{|t-t_0 |}} , & \text{if} \quad \alpha + \beta = 1  \\
                |t-t_0 | , & \text{if} \quad  \alpha + \beta > 1  .
        \end{cases}
        \end{align*}       
In conclusion, taking into account that $1< \ln \dfrac{2d}{|z-z_0 |}$ we get the desired result.
\end{proof}
\section{Future Work}
Some inverse estimates for principal values of complex Cauchy-type integrals has been proved recently in \cite{MamedkhanovJafarov2026}. Extending these inverse estimates to the hypercomplex analysis context is the subject of ongoing work.

\section*{Statements and Declarations}
\subsection*{Funding} \leavevmode \\
The work was partially supported by Instituto Polit\'ecnico Nacional (grant numbers IND-2026–0101, SIP20260491) and SECIHTI (grant number 244139).
\subsection*{Conflicts of interest} \leavevmode \\
No competing interests appear to influence the work reported in this paper are disclosed by the authors.
\subsection*{Author contributions} \leavevmode \\
All authors provided critical feedback and contributed to the drafting of the paper and approved the final manuscript. 
\subsection*{ORCID}\leavevmode \\
\noindent
Isidro Paulino-Basurto: \url{https://orcid.org/0009-0003-4550-4851}
\\
Jean Marie Vilaire: \url{https://orcid.org/0009-0002-7193-468X}
\\
Jos\'e Oscar Gonz\'alez-Cervantes: \url{https://orcid.org/0000-0003-4835-5436}
\\
Juan Bory-Reyes: \url{https://orcid.org/0000-0002-7004-1794}

\end{document}